\newtheorem{thm}{Theorem}[section]
\newtheorem{lem}[thm]{Lemma}
\newtheorem{cor}[thm]{Corollary}
\theoremstyle{definition}
\theoremstyle{remark}
\numberwithin{equation}{section}
\newcommand{\R}{\mathbf{R}}  
\newcommand{\C}{\mathbf{C}}
\newcommand{\Z}{\mathbf{Z}}
\begin{document}

\title[The first moment of cusp form $L$-functions]{The first moment of cusp form $L$-functions in weight aspect on average}

\author{Olga  Balkanova}
\address{University of Turku, Department of Mathematics and Statistics, Turku, 20014, Finland}
\email{olgabalkanova@gmail.com}
\thanks{Research of the first author is supported by Academy of Finland project no. $293876$.}

\author{Dmitry  Frolenkov}
\address{National Research University Higher School of Economics, Moscow, Russia and Steklov Mathematical Institute of Russian Academy of Sciences,  8 Gubkina st., Moscow, 119991, Russia}
\email{frolenkov@mi.ras.ru}
\begin{abstract}
We study the asymptotic behaviour  of the twisted first moment of central $L$-values associated to cusp forms in weight aspect on average. Our estimate of the error term allows extending the logarithmic length of mollifier $\Delta$ up to $2$. The best previously known result, due to Iwaniec and Sarnak, was $\Delta<1$.
The proof is based on a representation formula for the error  in terms of Legendre polynomials.
\end{abstract}

\keywords{primitive forms; L-functions; weight aspect;  Legendre polynomials}
\subjclass[2010]{Primary: 11F12}

\maketitle

\tableofcontents


\section{Introduction}
The  technique of mollification allows proving strictly positive non-vanishing results for different families of $L$-functions.  The idea of the method is to regularize the behavior of $L$-functions while averaging over the family by  introducing smoothing weights called mollifiers. Common choice for mollifier is a Dirichlet polynomial of length $M$ approximating the inverse of $L$-function.
It is of crucial importance to optimize the parameter $M$, called mollifier's length, as it determines the proportion of non-vanishing $L$-values.

Consider the family $H_{2k}(1)$ of primitive forms of level $1$ and weight $2k \geq 12$.
Every $f \in H_{2k}(1)$ has a Fourier expansion of the form
\begin{equation}
f(z)=\sum_{n\geq 1}\lambda_f(n)n^{(2k-1)/2}e(nz).
\end{equation}
The associated $L$-function is defined by
\begin{equation}L_f(s)=\sum_{n \geq 1}\frac{\lambda_f(n)}{n^s}, \quad \Re{s}>1.
\end{equation}

Let $\Gamma(s)$ be the Gamma function. The completed $L$-function
\begin{equation}
\Lambda_f(s)=\left(\frac{1}{2 \pi}\right)^s\Gamma\left(s+\frac{2k-1}{2}\right)L_f(s)
\end{equation}
 satisfies the functional equation
\begin{equation} \label{eq: functionalE}
\Lambda_f(s)=\epsilon_f\Lambda_f(1-s), \quad \epsilon_f=i^{2k}
\end{equation}
and can be analytically continued on the whole complex plane.

 The harmonic summation is defined by
\begin{equation}
\sum_{f \in H_{2k}(1)}^{h}\alpha_f:=\sum_{f \in H_{2k}(1)}\alpha_f\frac{ \Gamma(2k-1)}{(4\pi)^{2k-1}  \langle f,f\rangle_1},
\end{equation}
where $\langle f,f\rangle_1$ is the Petersson inner product on the space of level $1$ holomorphic modular forms.

The usual choice for mollifier is
\begin{equation}
M(f)=\sum_{m\leq M}\frac{x_m\lambda_f(m)}{m^{1/2}}, \quad x_m \in \R,
\end{equation}
where parameter $M$ is called the length of mollifier.

Let  $h$ be a suitable test function (see section \ref{averagingoverweight} for details) and
\begin{equation}
H:=\int_{0}^{\infty}h(y)dy.
\end{equation}

Let $\mu(m)$ be the M\"{o}bius function and $\sigma(m)$ be the sum of divisors function.
Iwaniec and Sarnak \cite[Theorem~3]{IS} proved that for the mollifier with
\begin{equation} \label{eq:xm}
x_m \sim \frac{\mu(m)m (\log{M}/\log{m})^2}{\sigma(m)2\zeta(2)\log{M}}
\end{equation}
of length $M\leq K(\log{K})^{-20}$, one has
\begin{equation}\label{eq:firstm}
\sum_{k}h\left( \frac{2k}{K}\right)\sum_{f \in H_{2k}(1)}^{h}L_f(1/2)M(f)\sim HK,
\end{equation}
\begin{equation}\label{eq:secondm}
\sum_{k}h\left( \frac{2k}{K}\right)\sum_{f \in H_{2k}(1)}^{h}L_{f}^{2}(1/2)M^2(f)\sim 2HK\left( 1+\frac{\log{K}}{\log{M}}\right).
\end{equation}

Let $M:=K^{\Delta}$, where parameter $\Delta$ is called the logarithmic length of mollifier.
Note that if $\epsilon_f=i^{2k}=-1$, then it follows from functional equation \eqref{eq: functionalE} that $ L_{f}(1/2)$ is identically zero. For $\epsilon_f=1$
equations \eqref{eq:firstm}, \eqref{eq:secondm}
imply (see \cite{BF1} for details) that  at least
\begin{equation}\label{eq:proportion}
\frac{\Delta}{\Delta+1}
\end{equation} of central $L$-values do not vanish on average as $K\rightarrow \infty$. 

Taking the largest admissible $\Delta=1-\epsilon$, the percentage of non-vanishing is no less than $50\%$.
Furthermore, according to \cite[Proposition~16]{IS} any improvement over $50\%$ with an additional lower bound on $L_f(1/2)$ would imply the non-existence of Landau-Siegel zeros for Dirichlet $L$-functions of real primitive characters.

In order to break the $50\%$ barrier one needs to increase the length of mollifier for both first  and second moments.  

In the present paper we consider only the first moment and show that  equation \eqref{eq:firstm} holds for the length of mollifier $M\leq K^{2-\epsilon}$ for any $\epsilon>0$.
This extension follows from the asymptotic formula for the twisted first moment.
\begin{thm}\label{thm:main}
For all $l $ one has
\begin{multline}
M_1(l):=\sum_{k}h\left(\frac{4k}{K} \right)\sum_{f \in H_{4k}(1)}^h\lambda_f(l)L_f(1/2)=\\ \frac{2}{\sqrt{l}}\frac{HK}{4}+O\left(K\frac{l^{1/2+\epsilon}}{K^2}\right).
\end{multline}
\end{thm}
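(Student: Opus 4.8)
The plan is to begin with an approximate functional equation for $L_f(1/2)$. Using the functional equation \eqref{eq: functionalE} with $\epsilon_f = i^{4k} = 1$ for the weights $4k$ appearing in the statement, one writes
\begin{equation*}
L_f(1/2) = 2\sum_{n\geq 1}\frac{\lambda_f(n)}{\sqrt{n}}\, V_k\!\left(\frac{n}{X}\right)
\end{equation*}
for a smooth cutoff $V_k$ depending on the archimedean factor $\Gamma(s+(4k-1)/2)$, with $X$ of size comparable to the analytic conductor $k$. Inserting this into $M_1(l)$ and exchanging the order of summation, the inner sum becomes a harmonic average $\sum_{f\in H_{4k}(1)}^h \lambda_f(l)\lambda_f(n)$, to which I would apply the Petersson trace formula. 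The diagonal term $n = l$ produces the main term: summing the resulting $V_k(l/X)$ against $h(4k/K)$ over $k$ and using $\sum_k h(4k/K)\cdot(\text{const}) \sim HK/4$ after the relevant Mellin/Stirling analysis gives $\frac{2}{\sqrt l}\cdot\frac{HK}{4}$. The off-diagonal contribution is a weighted sum of Kloosterman sums $S(l,n;c)$ times a Bessel function $J_{4k-1}(4\pi\sqrt{ln}/c)$, and the whole error analysis lives here.

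The key innovation, advertised in the abstract, is to handle the weight average of the Bessel terms via a representation in terms of Legendre polynomials. I would first perform the $k$-sum: the sum $\sum_k h(4k/K) J_{4k-1}(z)$ over the weights can be evaluated by a contour-integral / Mellin-transform manipulation, or more cleverly by recognizing that averaging $J_{4k-1}$ against a smooth weight in $k$ collapses (via known integral identities for Bessel functions, e.g.\ the Weber–Schafheitlin type formulas or the generating function $\sum_\nu J_\nu(z)t^\nu$) into an expression involving Legendre polynomials $P_n$ of a bounded argument. This is presumably the ``representation formula for the error in terms of Legendre polynomials'' referenced in the abstract, and I would expect it to be stated as a separate lemma earlier in the paper that I am entitled to invoke. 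Once the $k$-sum is done, what remains is a sum over moduli $c$ and over $n$ of Kloosterman sums against Legendre polynomial values, and the decay/oscillation of $P_n$ together with the bound $|P_n(\cos\theta)| \ll (n\sin\theta)^{-1/2}$ is what should yield the gain.

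The remaining steps are then: bound the off-diagonal sum using Weil's bound $|S(l,n;c)| \ll c^{1/2+\epsilon}(l,n,c)^{1/2}$ together with the Legendre polynomial estimates, carefully tracking the dependence on $l$ (which enters both through the argument of the Kloosterman sum and through the length of the $n$-sum, effectively $n \ll X \asymp K^2$ times logarithmic factors), and optimizing. The target error $O(K\cdot l^{1/2+\epsilon}/K^2) = O(l^{1/2+\epsilon}/K)$ is what dictates how sharp the Legendre-polynomial input must be — a naive Weil-bound-only estimate would not reach it, so the cancellation coming from summing the Legendre polynomials (or equivalently from the oscillation in the original Bessel-weight sum) is essential. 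After that, deducing the extension of \eqref{eq:firstm} to $M \leq K^{2-\epsilon}$ is routine: one sums $M_1(l)$ against $x_l/\sqrt l$ over $l \leq M = K^\Delta$, the main terms assemble to $HK$ as in \eqref{eq:firstm}, and the error is $\sum_{l\leq M} |x_l| l^{1/2+\epsilon}/(K\sqrt l) \ll M^{1+\epsilon}/K = K^{\Delta - 1 + \epsilon}$, which is $o(K)$ precisely when $\Delta < 2$.

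The main obstacle I anticipate is establishing the Legendre-polynomial representation with enough uniformity and then exploiting it: the interchange of the $k$-sum with the $c$- and $n$-sums must be justified, the transition from Bessel functions of large order averaged over the order to Legendre polynomials of fixed-ish degree requires a clean identity (most likely one of the classical formulas relating $\int J_\nu$ or sums of $J_\nu$ to $P_n$), and finally one must show the resulting alternating/oscillating sum over $n$ of $P_n$(something) genuinely exhibits square-root-type cancellation rather than just the trivial bound. Getting the $l$-dependence to come out as $l^{1/2+\epsilon}$ rather than something weaker (which would cap $\Delta$ below $2$) is the delicate quantitative point.
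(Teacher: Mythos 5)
Your skeleton is right (Petersson, diagonal plus dual main term giving the factor $2/\sqrt{l}$, all the work in the Bessel/Kloosterman off-diagonal, with the gain coming from oscillation in the weight average), but the central mechanism of the paper is misidentified at the two places where the proof actually happens, and the substitutes you propose would not work.

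First, the provenance of the Legendre polynomials. They do not arise from averaging $J_{4k-1}(4\pi\sqrt{ln}/c)$ over $k$; no generating-function or Weber--Schafheitlin identity turns a smooth average of integer-order Bessel functions over their order into Legendre polynomials. What the paper does instead is start from an exact formula for the twisted moment (Theorem \ref{Thm:explicitformula}, imported from \cite{BF}), whose error term involves ${}_1F_1(k,2k;\cdot)$; at the central point this is a Bessel function of \emph{half-integer} order $J_{2k-1/2}$, and the classical identity
\begin{equation*}
J_{n+1/2}(z)=(-i)^n\sqrt{\tfrac{z}{2\pi}}\int_{0}^{\pi}e^{iz\cos\theta}P_n(\cos\theta)\sin\theta\,d\theta
\end{equation*}
then represents each \emph{individual} term, for each fixed $k$, as an integral of $P_{2k-1}(\cos\theta)$. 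The lemma you hoped to invoke (a $k$-averaged Bessel sum collapsing to Legendre polynomials) is not the lemma the paper has, and the step as you describe it fails.

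Second, the source of the cancellation. The paper uses no Weil bound and no cancellation in the $n$-sum of Legendre polynomials: after the exact formula, the surviving $c,n$-range is short ($cn\ll l/k$) and is bounded trivially, contributing $l^{1/2+\epsilon}$. The entire gain comes from the $k$-sum $\sum_k h(4k/K)P_{2k-1}(\cos\theta)$ inside the $\theta$-integral: the uniform Baratella--Gatteschi asymptotics reduce $P_{2k-1}(\cos\theta)$ to $\sqrt{\theta/\sin\theta}\,J_0(N\theta)$ with $N=2k-1/2$, hence to $\cos(N\theta-\pi/4)/\sqrt{N}$, and Poisson summation in $k$ then shows this average is negligible for $\theta\gg 1/K$. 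Your proposed inputs --- Weil's bound together with the pointwise estimate $|P_n(\cos\theta)|\ll (n\sin\theta)^{-1/2}$ applied term by term --- extract no oscillation in $k$ and cannot reach $O(l^{1/2+\epsilon}/K)$. The deduction of $\Delta<2$ from the theorem at the end of your proposal is fine, but the theorem itself is not proved by the route you sketch.
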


More precisely, the mollified moment can be expressed in terms of the twisted moment
\begin{equation*}
\sum_{k}h\left( \frac{4k}{K}\right)\sum_{f \in H_{4k}(1)}^{h}L_f(1/2)M(f)=
\sum_{m\leq M}\frac{x_m}{\sqrt{m}}M_1(m).
\end{equation*}

Then the length of mollifier  is the largest admissible $M$ such that
\begin{equation*}
K\sum_{m\leq M}\frac{|x_m|}{\sqrt{m}}\frac{m^{1/2+\epsilon}}{K^2}\ll K^{1-\epsilon}.
\end{equation*}
Therefore, for any mollifier with $|x_m| \leq \log{M}$ and any $\epsilon>0$  one can take  $M\leq K^{2-\epsilon}$. Consequently, the logarithmic length of mollifier $\Delta$ can be extended up to  $2$.

The detailed description of the mollifier method and analogous results for an individual weight can be found in \cite{BF1}.
\section{Special functions}

For $z \in \C$, $\Re{z}>0$ the Gamma function is defined by
\begin{equation}
\Gamma(z)=\int_{0}^{\infty}t^{z-1}\exp(-t)dt.
\end{equation}
By \cite[Eq.~5.5.5]{HMF} for $2z\neq 0,-1,-2,\ldots $ one has
\begin{equation}\label{eq:gamma2}
\Gamma(2z)=\frac{1}{\sqrt{\pi}}2^{2z-1}\Gamma(z)\Gamma(z+1/2).
\end{equation}

Let $$e(x):=\exp{(2\pi ix)}.$$
The confluent hypergeometric function
\begin{equation}
{}_1F_{1}(a,b;x) =\frac{\Gamma(b)}{\Gamma(a)}\sum_{k=0}^{\infty}\frac{\Gamma(a+k)}{\Gamma(b+k)}\frac{x^{k}}{k!}
\end{equation}
 can be expressed in terms of the Bessel function of the first kind
\begin{equation}
J_{v}(x)=\sum_{m=0}^{\infty}\frac{(-1)^m}{m!\Gamma(m+v+1)}\left( \frac{x}{2}\right)^{2m+v}.
\end{equation}
\begin{lem} For $\epsilon=\pm 1$ one has
\begin{multline}\label{eq:repF}
{}_1F_{1}\left(k,2k;2z\right)=\Gamma(k+1/2)\exp(z)\left(\frac{z}{2}\right)^{1/2-k}\times \\ e\left(\epsilon\frac{1/2-k}{4}\right)J_{k-1/2}\left( ze\left(\frac{\epsilon}{4}\right)\right).
\end{multline}
\end{lem}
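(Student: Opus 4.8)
The plan is to recognize \eqref{eq:repF} as a disguised form of Kummer's second formula
\[
{}_1F_1(k,2k;2z)=\exp(z)\,{}_0F_1\!\left(;k+\tfrac12;\tfrac{z^2}{4}\right),
\]
and to prove the latter by comparing Taylor coefficients. Throughout write $(a)_j:=\Gamma(a+j)/\Gamma(a)$ for the Pochhammer symbol and set $\nu:=k-1/2$.

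\emph{Step 1: rewriting the right-hand side.} Since $e(\epsilon/4)=i^{\epsilon}$ and $\epsilon=\pm1$, one has $(i^{\epsilon})^{2m}=(-1)^m$, so expanding the Bessel series with a consistent branch of the fractional powers gives
\begin{multline*}
J_{\nu}\bigl(z\,e(\epsilon/4)\bigr)=\sum_{m\ge0}\frac{(-1)^m\,(i^{\epsilon})^{2m+\nu}}{m!\,\Gamma(m+\nu+1)}\left(\frac{z}{2}\right)^{2m+\nu}\\
=i^{\epsilon\nu}\sum_{m\ge0}\frac{1}{m!\,\Gamma(m+\nu+1)}\left(\frac{z}{2}\right)^{2m+\nu}.
\end{multline*}
Because $i^{\epsilon\nu}=e(\epsilon\nu/4)=e\bigl(-\epsilon(1/2-k)/4\bigr)$, this factor cancels the phase $e\bigl(\epsilon(1/2-k)/4\bigr)$ in \eqref{eq:repF}; in particular the right-hand side of \eqref{eq:repF} is the same for $\epsilon=+1$ and $\epsilon=-1$. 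Cancelling the surviving $(z/2)^{\nu}$ against $(z/2)^{1/2-k}$ and using $\Gamma(k+1/2)/\Gamma(m+k+1/2)=1/(k+1/2)_m$, the right-hand side of \eqref{eq:repF} becomes
\[
\Gamma(k+\tfrac12)\exp(z)\sum_{m\ge0}\frac{(z/2)^{2m}}{m!\,\Gamma(m+k+1/2)}=\exp(z)\sum_{m\ge0}\frac{(z^2/4)^m}{m!\,(k+1/2)_m},
\]
which is $\exp(z)\,{}_0F_1(;k+1/2;z^2/4)$. So it remains to prove Kummer's formula.

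\emph{Step 2: the coefficient comparison.} Both sides of Kummer's formula are entire in $z$, so it suffices to match the coefficient of $z^n$. On the left it is $2^n(k)_n/\bigl(n!\,(2k)_n\bigr)$. On the right, expanding $\exp(z)$ and using $(2m)!/(4^m m!)=(1/2)_m$ gives $\tfrac{1}{n!}\sum_{2m\le n}\binom{n}{2m}\tfrac{(1/2)_m}{(k+1/2)_m}$. Rewriting $\binom{n}{2m}=(-n)_{2m}/(2m)!$ and applying the quadratic relation $(a)_{2m}=4^m(a/2)_m\bigl((a+1)/2\bigr)_m$ to both $(-n)_{2m}$ and $(2m)!=4^m m!(1/2)_m$ turns the inner sum into the terminating Gauss series ${}_2F_1\bigl(-n/2,(1-n)/2;k+1/2;1\bigr)$, which by the Chu--Vandermonde evaluation equals $\Gamma(k+1/2)\Gamma(k+n)/\bigl(\Gamma(k+n/2)\Gamma(k+n/2+1/2)\bigr)$. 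Finally I would match this against $2^n(k)_n/(2k)_n=2^n\Gamma(k+n)\Gamma(2k)/\bigl(\Gamma(k)\Gamma(2k+n)\bigr)$ by applying the duplication formula \eqref{eq:gamma2} to $\Gamma(2k)$ and to $\Gamma(2k+n)=\Gamma\bigl(2(k+n/2)\bigr)$; the powers of $2$ and the surplus $\Gamma$-factors cancel, which proves Kummer's formula and hence \eqref{eq:repF}.

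The computation is essentially bookkeeping with Pochhammer symbols, and the step that genuinely needs care is the first one: one must keep the branches of $z^{1/2-k}$ and of $J_{k-1/2}$ compatible so that the $\epsilon$-dependent phases really cancel and the asserted identity holds simultaneously for $\epsilon=\pm1$ — the cleanest way around this is to read \eqref{eq:repF} as an identity of power series in $z$, the right-hand side being entire after the cancellation in Step 1. As an alternative to Step 2 one could instead note that $v(z):={}_0F_1(;k+1/2;z^2/4)$ satisfies $zv''+2kv'-zv=0$, hence $\exp(z)v(z)$ satisfies Kummer's equation $zu''+(2k-2z)u'-2ku=0$, and then invoke uniqueness of the solution analytic at $z=0$ normalised by $u(0)=1$; but the coefficient comparison is more self-contained and is the route I would take.
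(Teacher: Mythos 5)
Your proof is correct and follows essentially the same route as the paper, which simply cites DLMF 13.6.9 (the relation ${}_1F_1(\nu+1/2,2\nu+1;2z)=\Gamma(1+\nu)e^{z}(z/2)^{-\nu}I_{\nu}(z)$, i.e.\ Kummer's second theorem — your Step 2) and DLMF 10.27.6 (the rotation $I_{\nu}(z)=e^{\mp\nu\pi i/2}J_{\nu}(ze^{\pm\pi i/2})$ — your Step 1). The only difference is that you prove these two standard identities from the power series rather than citing them, and your remark that the phases force the right-hand side to be independent of $\epsilon$ is a correct and worthwhile sanity check.
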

\begin{proof}
Using \cite[Eq.~13.2.2]{HMF} and \cite[Eq.~13.6.9]{HMF}, we write the confluent hypergeometric function
in terms of the $I$-Bessel function. Further, applying \cite[Eq.~10.27.6]{HMF}, we prove the required result.
\end{proof}

Legendre polynomials are $n$th degree polynomials given by Rodrigues' formula
\begin{equation}
P_n(x)=\frac{1}{2^n n!}\frac{d^n}{dx^n}\left[(x^2-1)^n\right].
\end{equation}
Note that by \cite[Eq.~14.7.17]{HMF} 
\begin{equation}\label{eq:legsign}
P_n(-x)=(-1)^nP_n(x).
\end{equation}

\begin{lem} For any nonnegative integer $n$ one has
\begin{equation}\label{eq:LegendreRepr}
J_{n+1/2}(z)=(-i)^n\sqrt{\frac{z}{2\pi}}\int_{0}^{\pi}\exp(iz\cos{\theta})P_n(\cos{\theta})\sin{\theta}d\theta.
\end{equation}
\end{lem}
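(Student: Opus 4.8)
The cleanest route is to work from the right-hand side and recover the Bessel series. I would start by expanding $\exp(iz\cos\theta)=\sum_{j\ge 0}\frac{(iz\cos\theta)^j}{j!}$ and integrating term by term against $P_n(\cos\theta)\sin\theta\,d\theta$; after the substitution $x=\cos\theta$ this becomes $\sum_{j\ge0}\frac{(iz)^j}{j!}\int_{-1}^{1}x^j P_n(x)\,dx$. The key classical fact is that $\int_{-1}^1 x^j P_n(x)\,dx$ vanishes unless $j\ge n$ and $j\equiv n\pmod 2$, and for $j=n+2m$ it equals an explicit ratio of Gamma functions (this follows from Rodrigues' formula by $n$-fold integration by parts, or from the generating function / orthogonality of $P_n$ against polynomials of lower degree). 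Substituting this evaluation, shifting the summation index to $j=n+2m$, and collecting the powers of $z$ should reproduce, up to the prefactor $(-i)^n\sqrt{z/2\pi}$, exactly the defining series $J_{n+1/2}(z)=\sum_{m\ge0}\frac{(-1)^m}{m!\,\Gamma(m+n+3/2)}(z/2)^{2m+n+1/2}$, where the half-integer Gamma values are handled via the duplication formula \eqref{eq:gamma2}.

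Alternatively — and this may be what the authors intend, given that \eqref{eq:repF} was just established — one can deduce \eqref{eq:LegendreRepr} from the classical Poisson-type integral representation of the Bessel function together with the Rodrigues formula: write $J_{n+1/2}(z)$ via its standard integral $\frac{(z/2)^{n+1/2}}{\Gamma(n+1)\Gamma(1/2)}\int_0^\pi \exp(iz\cos\theta)\sin^{2n+1}\theta\,d\theta$ (valid for $\Re(n+1/2)>-1/2$), then use that $\sin^{2n+1}\theta\,d\theta$ integrated against a smooth function can be recast, via $n$ integrations by parts in $x=\cos\theta$, as an integral involving $\frac{d^n}{dx^n}(1-x^2)^n$, which is $(-1)^n 2^n n!\,P_n(x)$ by Rodrigues. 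Matching constants (the factors $2^n n!$, $\Gamma(n+1)$, $\Gamma(1/2)=\sqrt\pi$, and the $(-i)^n$) gives the claim.

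I would present the first approach as the main line since it is self-contained. The main obstacle is purely bookkeeping: correctly evaluating $\int_{-1}^1 x^{n+2m}P_n(x)\,dx$ and then simplifying the resulting product of Gamma functions — using \eqref{eq:gamma2} to convert $\Gamma(n+2m+1)$, $\Gamma(m+1)$, and the half-integer factors — into the single coefficient $\frac{(-1)^m}{m!\,\Gamma(m+n+3/2)}$ of the Bessel series. Convergence is not an issue: for fixed $z$ the exponential series converges uniformly on $[0,\pi]$, so term-by-term integration is justified, and the resulting series in $m$ is the (entire) Bessel series. One should also note the sign: since $P_n(x)$ has parity $(-1)^n$ by \eqref{eq:legsign}, only $j\equiv n\pmod2$ survives, which is exactly what produces the pure power $z^{n+1/2}$ times a series in $z^2$, consistent with $J_{n+1/2}$.
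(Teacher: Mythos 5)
Your proposal is correct, and the computation goes through: with $j=n+2m$ one has $\int_{-1}^{1}x^{j}P_n(x)\,dx=\frac{2^{n+1}j!\,(n+m)!}{m!\,(2n+2m+1)!}$, and after applying the duplication formula \eqref{eq:gamma2} to $\Gamma(2n+2m+2)$ the factorials collapse to $\frac{\sqrt{\pi}}{2^{n+2m}\Gamma(n+m+3/2)}$, which combined with $(-i)^n i^{n+2m}=(-1)^m$ and the prefactor $\sqrt{z/(2\pi)}$ reproduces the series for $J_{n+1/2}(z)$ exactly. The route is, however, genuinely different from the paper's: the authors simply cite two formulas from the NIST Handbook, namely the relation $j_n(z)=\sqrt{\pi/(2z)}\,J_{n+1/2}(z)$ between $J_{n+1/2}$ and the spherical Bessel function (Eq.~10.47.3) and the Poisson-type representation $j_n(z)=\frac{(-i)^n}{2}\int_{-1}^{1}e^{izt}P_n(t)\,dt$ (Eq.~10.54.2), which give the lemma after the substitution $t=\cos\theta$. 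What your argument buys is self-containedness: it proves the cited NIST identity from scratch using only the series definitions already displayed in the paper, the parity relation \eqref{eq:legsign}, and the duplication formula \eqref{eq:gamma2}; the cost is the Gamma-function bookkeeping, which the citation avoids entirely. Your second sketch (Poisson integral plus Rodrigues and $n$-fold integration by parts) is also sound and is essentially the standard proof of NIST~10.54.2 itself. Either version is acceptable; just make sure, if you present the first, to state and justify the moment formula $\int_{-1}^{1}x^{n+2m}P_n(x)\,dx$ explicitly rather than leaving it as ``an explicit ratio of Gamma functions.''
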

\begin{proof}
The assertion follows from  \cite[Eq.~10.47.3]{HMF} and \cite[Eq.~10.54.2]{HMF}.
\end{proof}

\begin{lem} Let $|\arg{z}|<\pi$. As $z \rightarrow \infty$ we have
\begin{multline}\label{eq:repJBessel}
J_0(z)=\sqrt{\frac{2}{\pi z}}\Biggl(
\cos(z-\pi/4)\left[ \sum_{j=0}^{d-1}(-1)^s\frac{a_{2j}}{z^{2j}}+R_1\right]-\\
\sin(z-\pi/4)\left[ \sum_{j=0}^{d-1}(-1)^s\frac{a_{2j+1}}{z^{2j+1}}+R_2\right]
\Biggr),
\end{multline}
where
\begin{equation}
a_{j}=\frac{\Gamma(j+1/2)}{2^jj!\Gamma(-j+1/2)} \text{ for } j \geq 0,
\end{equation}
\begin{equation}
R_1=O\left(\frac{1}{(2z)^{2d}}\right),\quad R_2=O\left(\frac{1}{(2z)^{2d+1}}\right).
\end{equation}
\end{lem}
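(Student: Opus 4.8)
The statement to be proved is the standard large-argument asymptotic expansion of the Bessel function $J_0$, presented with explicit remainder terms after truncating at order $d$. The natural route is to start from the general Hankel-type asymptotic expansion for $J_\nu(z)$ as $z\to\infty$ with $|\arg z|<\pi$, which is tabulated in \cite{HMF} (the section on asymptotic expansions for large argument, e.g.\ \cite[Eq.~10.17.3]{HMF}), and then specialize to $\nu=0$. That reference gives $J_\nu(z)\sim\sqrt{2/(\pi z)}\bigl(\cos\omega\sum_{j\ge0}(-1)^j a_{2j}(\nu)/z^{2j}-\sin\omega\sum_{j\ge0}(-1)^j a_{2j+1}(\nu)/z^{2j+1}\bigr)$ with $\omega=z-\nu\pi/2-\pi/4$, where the coefficients $a_j(\nu)$ are the usual ones built from $(4\nu^2-1)(4\nu^2-9)\cdots$. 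Setting $\nu=0$ collapses $\omega$ to $z-\pi/4$ and reduces $a_j(\nu)$ to the quantity $a_j=\Gamma(j+1/2)/(2^j j!\,\Gamma(-j+1/2))$ quoted in the statement; a short computation with the reflection and duplication formulas for $\Gamma$ (the latter being \eqref{eq:gamma2} from the excerpt) confirms this identification.

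The second ingredient is the error bound. The cited asymptotic expansion in \cite{HMF} comes equipped with the standard remainder estimate: if one truncates the cosine-series after the term $j=d-1$ and the sine-series likewise, the tails are bounded by the first omitted term in absolute value (for real $z>0$, or more generally in any fixed sector $|\arg z|\le\pi/2$; in a larger sector one picks up a bounded constant). Concretely, $R_1$, the remainder of the even series, is $O(a_{2d}/z^{2d})=O\bigl(1/(2z)^{2d}\bigr)$ because $a_{2d}\asymp 1/2^{2d}$ up to a $d$-dependent constant absorbed into the implied constant, and similarly $R_2=O\bigl(1/(2z)^{2d+1}\bigr)$. So the proof is essentially: (i) quote the Hankel expansion with remainder, (ii) put $\nu=0$, (iii) verify the coefficient formula, (iv) read off the sizes of $R_1,R_2$.

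The one genuine subtlety — and the step I would be most careful about — is the factor $(-1)^s$ appearing in the displayed formula \eqref{eq:repJBessel}. As written in the excerpt the index $s$ is not defined and almost certainly should be $j$ (i.e.\ the sign alternates term-by-term, matching the $(-1)^j$ in the Hankel expansion); I would state this explicitly and carry the alternating sign through from \cite{HMF} rather than leave it ambiguous. The rest is bookkeeping: matching the normalization $\sqrt{2/(\pi z)}$, checking that $\cos\omega=\cos(z-\pi/4)$ and $\sin\omega=\sin(z-\pi/4)$ when $\nu=0$, and confirming that the $\Gamma$-quotient definition of $a_j$ agrees with the classical recursive coefficients. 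No hard analysis is involved — the content is entirely a specialization of a known expansion — so I would keep the written proof to a couple of lines citing the relevant \cite{HMF} equations, exactly in the style of the two preceding lemmas in the excerpt.
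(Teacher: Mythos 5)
Your proposal is correct and follows essentially the same route as the paper, which simply cites the Hankel asymptotic expansion \cite[Eq.~10.17.3]{HMF} together with the explicit remainder form in \cite[Eq.~8.451.1]{GR} and specializes to $\nu=0$. Your observation that the undefined index $s$ in \eqref{eq:repJBessel} should read $j$ is a correct reading of a typo in the statement.
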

\begin{proof}
See \cite[Eq.~8.451.1]{GR} and \cite[Eq.~10.17.3]{HMF}.
\end{proof}

\section{Exact formula for the first moment}\label{sec:EF}
In this section we consider the first moment of primitive $L$-functions and show how to express the error in terms of special functions.
For $\epsilon_1=\pm 1$ let us define
\begin{multline}\label{eq:I}
I_{\epsilon_1}(u,v,k;x):=e\left(\frac{\epsilon_1}{8}-\frac{\epsilon_1 k}{4}\right)x^{1/2-k}\frac{\Gamma(k-v-u)} {\Gamma(2k)}\\ \times{}_1F_{1}\left(k-v-u,2k,-\frac{e(-\epsilon_1/4)}{x}\right).
\end{multline}

As a consequence of the Petersson trace formula we obtain the exact formula for the twisted first moment.
\begin{thm}\label{Thm:explicitformula}
For $2k \geq 12$, $\Re{v}=0$,  $|\Re{u}|<k-1$ we have
\begin{multline}\label{eq:exactf}
\sum_{f \in H_{2k}(1)}^{h}\lambda_f(l)L_f(1/2+u+v)=\frac{1}{l^{1/2+u+v}}\\+i^{2k}\frac{(2\pi)^{2u+2v}\Gamma(k-u-v)}{l^{1/2-u-v}\Gamma(k+u+v)}
+2\pi i^{2k}V_1(l;u,v,k).
\end{multline}
The error term is given by
\begin{multline}\label{eq:error}
V_1(l;u,v,k)=\sum_{c=1}^{\infty}\frac{1}{c^{1/2+u+v}}\sum_{\substack{n=1\\ (n,c)=1}}^{\infty}\frac{e(n^{*}lc^{-1})}{n^{1/2-u-v}}(2\pi)^{u+v-1/2}\\
\times e \left(\frac{ 1/2-u-v}{4}\right)I_{-1}\left(u,v,k;\frac{cn}{2\pi l}\right)+\sum_{c=1}^{\infty}\frac{1}{c^{1/2+u+v}}\sum_{\substack{n=1\\ (n,c)=1}}^{\infty}\frac{e(-n^{*}lc^{-1})}{n^{1/2-u-v}}\\
\times (2\pi)^{u+v-1/2}e \left(-\frac{ 1/2-u-v}{4}\right)I_{+1}\left(u,v,k;\frac{cn}{2\pi l}\right),
\end{multline}
where $nn^*\equiv 1 \pmod{c}$.
\end{thm}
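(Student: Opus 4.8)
The plan is to start from the approximate (or exact) functional equation for $L_f(1/2+u+v)$ and then apply the Petersson trace formula to the resulting sum over $f \in H_{2k}(1)$. Writing $\Lambda_f(s)$ for the completed $L$-function and using the functional equation \eqref{eq: functionalE} together with $\epsilon_f=i^{2k}$, I would express $L_f(1/2+u+v)$ as a sum of two Dirichlet series, one of length governed by the conductor with argument $s=1/2+u+v$ and the dual one with argument $1-s$, each weighted by ratios of Gamma factors coming from \eqref{eq: functionalE}. Multiplying by $\lambda_f(l)$ and summing harmonically, the diagonal contribution of the Petersson formula picks out the terms where the Hecke eigenvalue product $\lambda_f(l)\lambda_f(n)$ contributes $n = l$ (after using Hecke multiplicativity $\lambda_f(l)\lambda_f(n) = \sum_{d\mid (l,n)}\lambda_f(ln/d^2)$, the diagonal is $d=l=n$ or its analogue), which yields precisely the two main terms $l^{-1/2-u-v}$ and $i^{2k}(2\pi)^{2u+2v}\frac{\Gamma(k-u-v)}{\Gamma(k+u+v)}l^{-1/2+u+v}$ after simplifying the Gamma ratios via \eqref{eq:gamma2}.

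Next I would handle the off-diagonal terms. The Petersson formula contributes a sum over moduli $c\geq 1$ of Kloosterman sums $S(l,n;c)$ against the Bessel kernel $J_{2k-1}(4\pi\sqrt{ln}/c)$. Opening the Kloosterman sum and separating the residue $n^*$ with $nn^*\equiv 1\pmod c$, one is left with an additive character $e(\pm n^* l/c)$, a coprimality condition $(n,c)=1$, and an integral transform of the test function in $n$ against the Bessel function. The key analytic device is to convert the Bessel function $J_{2k-1}$ together with the Gamma factors from the functional equation into the confluent hypergeometric function ${}_1F_1(k-u-v,2k;\cdot)$: this is exactly the content of Lemma with \eqref{eq:repF}, read in reverse, which trades $J_{k-1/2}$ for ${}_1F_1(k,2k;\cdot)$ — here applied with the shift $k \mapsto k-u-v$ in the first parameter. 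Matching the phases $e(\epsilon(1/2-k)/4)$ and the power $(z/2)^{1/2-k}$ with the normalizations in the definition \eqref{eq:I} of $I_{\epsilon_1}$, and keeping track of the two choices $\epsilon_1=\pm 1$ arising from the two terms of the approximate functional equation (equivalently, from writing $\cos$ or the oscillatory Bessel asymptotic as a sum of two exponentials), produces the two $c$-sums in \eqref{eq:error} with arguments $cn/(2\pi l)$.

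The main obstacle I expect is bookkeeping rather than a conceptual difficulty: tracking the precise constants, phases $e(\pm(1/2-u-v)/4)$, and powers of $2\pi$ through the Mellin–Barnes manipulation of the functional equation and the Bessel-to-${}_1F_1$ conversion, while simultaneously justifying the interchange of the (conditionally convergent) $n$-sum with the $c$-sum and the contour integral. Convergence is delicate because the hypergeometric series ${}_1F_1(k-u-v,2k;\cdot)$ has argument of modulus $2\pi l/(cn)$, which is small for $cn$ large, so the tail converges, but the region $cn$ small must be controlled using the explicit polynomial-in-$l$ bound implicit in $|\Re u|<k-1$. I would organize this by first proving the identity formally (for $\Re u$ in a range where everything converges absolutely), and then extending in $u$ and $v$ by analytic continuation, noting that both sides are holomorphic in the stated region $\Re v=0$, $|\Re u|<k-1$ since the Gamma factors $\Gamma(k-u-v)$, $\Gamma(k+u+v)$ have no poles there and the special function $I_{\epsilon_1}$ inherits analyticity from ${}_1F_1$.
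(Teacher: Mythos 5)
The paper does not reprove this theorem from scratch: it cites \cite[Sections~4--5]{BF}, where the identity is established for prime power level, and only explains the one modification needed at level $1$, namely that a contour shift now crosses a pole and its residue produces the second main term $i^{2k}(2\pi)^{2u+2v}\Gamma(k-u-v)/(l^{1/2-u-v}\Gamma(k+u+v))$. Your overall architecture (functional equation plus Petersson trace formula, with the two main terms coming from the diagonal of the two halves of the functional equation) is consistent with that, and your Gamma-ratio for the second main term is the right one, though it comes straight from $\Gamma(1-s+\tfrac{2k-1}{2})/\Gamma(s+\tfrac{2k-1}{2})$ at $s=\tfrac12+u+v$ and needs no duplication formula.

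The genuine gap is in your derivation of the off-diagonal term \eqref{eq:error}. First, your proposed mechanism for producing ${}_1F_1(k-u-v,2k;\cdot)$ --- reading \eqref{eq:repF} ``in reverse with the shift $k\mapsto k-u-v$ in the first parameter'' --- cannot work: that substitution in \eqref{eq:repF} necessarily shifts the second parameter to $2(k-u-v)$ as well, whereas $I_{\epsilon_1}$ in \eqref{eq:I} has first parameter $k-u-v$ but \emph{unshifted} second parameter $2k$. A confluent hypergeometric function of this shape cannot come from any pointwise identity with $J_{2k-1}$; it arises only from an integral transform in which the exponent $-1/2+u+v$ of the summation variable feeds the first parameter, i.e.\ from actually carrying out the sum (or its Mellin--Barnes integral) of $e(\pm ny/c)\,y^{-1/2+u+v}J_{2k-1}(4\pi\sqrt{ly}/c)$ over the Petersson variable (a Laplace-transform evaluation of the type \cite[Eq.~6.621.1]{GR}). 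Second, and relatedly, the variable $n$ in \eqref{eq:error} is \emph{not} the Petersson off-diagonal variable you describe: it carries the dual weight $n^{-1/2+u+v}$, the Kloosterman coprimality condition $(n,c)=1$, and the inverse $n^*$ simultaneously, and the special function is evaluated at $cn/(2\pi l)$, which decays like $(cn/l)^{1/2-k}$ as $n\to\infty$ rather than oscillating like $J_{2k-1}(4\pi\sqrt{ln}/c)$. This structure is the signature of having opened $S(l,n;c)$, summed the resulting additively twisted Dirichlet series over the original variable, and applied the functional equation of the Lerch/Hurwitz zeta function to merge the residue class $a\ (\mathrm{mod}\ c)$ with the dual summation variable into a single sum over all $n\geq 1$ coprime to $c$. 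That dual-summation step is the technical heart of the exact formula and is absent from your plan; ``opening the Kloosterman sum and separating the residue $n^*$'' leaves you with a double sum (residues mod $c$ times the Dirichlet-series variable) that does not match \eqref{eq:error}.
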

\begin{proof}
This formula was proved in \cite[Sections~4-5]{BF} for prime power level $N=p^{v}$, $p$ prime, $v\geq 2$. 
When the level $N$ is equal to $1$, the function under the integral in \cite[Eq.~4.16]{BF} has a pole in view of \cite[Eq.~4.15]{BF}. Consequently, we cross this pole while shifting the contour of integration in the proof of \cite[Lemma~4.8]{BF}. This yields the additional main term
\begin{equation*}
i^{2k}\frac{(2\pi)^{2u+2v}\Gamma(k-u-v)}{l^{1/2-u-v}\Gamma(k+u+v)}
\end{equation*}
in formula \eqref{eq:exactf}.
The rest of the proof is exactly the same.
\end{proof}

We are interested in the behavior of the first moment at the critical point $1/2$ and therefore can let $u=v=0$.

\begin{lem} For $\epsilon_1=\pm 1$ one has
\begin{equation}\label{besselrepresentation}
e(-\epsilon_1/8)I_{\epsilon_1}(0,0,k;x)=\sqrt{\pi}
e\left(\frac{\epsilon_1 }{4\pi x}\right)e\left( \frac{-\epsilon_1k}{4}\right)J_{k-1/2}\left( \frac{1}{2x}\right).
\end{equation}
\end{lem}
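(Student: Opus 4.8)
The plan is to specialise the definition \eqref{eq:I} to $u=v=0$ and then convert the confluent hypergeometric function into a Bessel function via \eqref{eq:repF}, with the free sign $\epsilon$ there chosen to be $-\epsilon_1$. Setting $u=v=0$ in \eqref{eq:I} and multiplying by $e(-\epsilon_1/8)$, the leading phase $e(\epsilon_1/8-\epsilon_1 k/4)$ collapses to $e(-\epsilon_1 k/4)$, so that
\[
e(-\epsilon_1/8)I_{\epsilon_1}(0,0,k;x)=e\left(-\frac{\epsilon_1 k}{4}\right)x^{1/2-k}\frac{\Gamma(k)}{\Gamma(2k)}\,{}_1F_{1}\left(k,2k,-\frac{e(-\epsilon_1/4)}{x}\right).
\]
Since $-1=e(\epsilon_1/2)$ one has $-e(-\epsilon_1/4)=e(\epsilon_1/4)$, so the argument of the hypergeometric function is $2z$ with $z=e(\epsilon_1/4)/(2x)$; moreover $e(\epsilon_1/4)=\exp(i\pi\epsilon_1/2)$, whence $ze(-\epsilon_1/4)=1/(2x)$ and $\exp(z)=\exp(i\epsilon_1/(2x))=e(\epsilon_1/(4\pi x))$.

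Next I would apply \eqref{eq:repF} with $\epsilon=-\epsilon_1$, which rewrites ${}_1F_{1}(k,2k,-e(-\epsilon_1/4)/x)$ as $\Gamma(k+1/2)\exp(z)(z/2)^{1/2-k}e(-\epsilon_1(1/2-k)/4)J_{k-1/2}(ze(-\epsilon_1/4))$. By the first paragraph the Bessel argument becomes $1/(2x)$ and the exponential becomes $e(\epsilon_1/(4\pi x))$. Substituting into the display above and using the duplication formula \eqref{eq:gamma2} in the shape $\Gamma(k)\Gamma(k+1/2)/\Gamma(2k)=\sqrt{\pi}\,2^{1-2k}$ reduces the claim to an identity among elementary factors.

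It then remains to simplify. Collecting the powers gives $x^{1/2-k}(z/2)^{1/2-k}=x^{1/2-k}\left(e(\epsilon_1/4)/(4x)\right)^{1/2-k}=2^{2k-1}e(\epsilon_1(1/2-k)/4)$, whose factor $2^{2k-1}$ cancels $2^{1-2k}$ and leaves $\sqrt{\pi}$. Collecting the phases, the product $e(-\epsilon_1 k/4)\cdot e(\epsilon_1(1/2-k)/4)\cdot e(-\epsilon_1(1/2-k)/4)$ telescopes to $e(-\epsilon_1 k/4)$. Together with $\sqrt{\pi}$, the exponential $e(\epsilon_1/(4\pi x))$ and $J_{k-1/2}(1/(2x))$ this is precisely the right-hand side of \eqref{besselrepresentation}.

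The only delicate part is the bookkeeping with roots of unity. One must pick $\epsilon=-\epsilon_1$ rather than $\epsilon=\epsilon_1$ in \eqref{eq:repF}, since the latter would give the Bessel argument $e(\epsilon_1/2)/(2x)=-1/(2x)$ and force one to carry an extra root of unity through the computation; and one must choose the branch of the fractional power $(z/2)^{1/2-k}$ consistent with the conventions under which \eqref{eq:repF} is stated, which here is unproblematic because $x>0$, so the principal branch gives $(e(\epsilon_1/4))^{1/2-k}=e(\epsilon_1(1/2-k)/4)$. Everything else is routine.
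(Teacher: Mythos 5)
Your proposal is correct and follows essentially the same route as the paper: substitute \eqref{eq:repF} with the sign choice $\epsilon=-\epsilon_1$ into \eqref{eq:I} at $u=v=0$, so that the Bessel argument becomes $1/(2x)$, and then cancel the powers of $2$ via the duplication formula \eqref{eq:gamma2} and telescope the roots of unity. The bookkeeping (including the branch of $(z/2)^{1/2-k}$) checks out.
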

\begin{proof}
Substituting representation \eqref{eq:repF} in equation \eqref{eq:I} we have
\begin{multline*}
e(-\epsilon_1/8)I_{\epsilon_1}(0,0,k;x)=e\left( \frac{-\epsilon_1k}{4}\right)x^{1/2-k}\frac{\Gamma(k)\Gamma(k+1/2)}{\Gamma(2k)}\times\\
2^{2k-1}\exp\left(-\frac{e(-\epsilon_1/4)}{2x} \right)\left( -\frac{e(-\epsilon_1/4)}{x} \right)^{1/2-k}\times \\
e\left( \epsilon_2\frac{1/2-k}{4}\right)J_{k-1/2}\left( -\frac{e(-\epsilon_1/4)e(\epsilon_2/4)}{2x}\right),
\end{multline*}
where $\epsilon_2=\pm 1$.
Note that $-e(-\epsilon_1/4)=\epsilon_1 i$.
Choosing $\epsilon_2=-\epsilon_1$ yields
\begin{equation*}
-e(-\epsilon_1/4)e(\epsilon_2/4)=-\exp(\pi i)=1.
\end{equation*}
Thus
\begin{multline*}
e(-\epsilon_1/8)I_{\epsilon_1}(0,0,k;x)=\\\frac{\Gamma(k)\Gamma(k+1/2)}{\Gamma(2k)}2^{2k-1}
e\left(\frac{\epsilon_1 }{4\pi x}\right)e\left( \frac{-\epsilon_1k}{4}\right)J_{k-1/2}\left( \frac{1}{2x}\right).
\end{multline*}
It follows by equation \eqref{eq:gamma2} that
\begin{equation*}
e(-\epsilon_1/8)I_{\epsilon_1}(0,0,k;x)=\sqrt{\pi}
e\left(\frac{\epsilon_1 }{4\pi x}\right)e\left( \frac{-\epsilon_1k}{4}\right)J_{k-1/2}\left( \frac{1}{2x}\right).
\end{equation*}
\end{proof}
\begin{cor}\label{cor:error}
For $\epsilon_1=\pm 1$ one has
\begin{equation}
I_{\epsilon_1}(0,0,k;x)\ll \frac{(4x)^{-k+1/2}}{\Gamma(k+1/2)}.
\end{equation}
\end{cor}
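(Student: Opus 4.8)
The goal is to bound $I_{\epsilon_1}(0,0,k;x)$ using the Bessel representation from the preceding lemma, namely equation \eqref{besselrepresentation}. The plan is to start from
\begin{equation*}
e(-\epsilon_1/8)I_{\epsilon_1}(0,0,k;x)=\sqrt{\pi}\,
e\left(\frac{\epsilon_1}{4\pi x}\right)e\left(\frac{-\epsilon_1k}{4}\right)J_{k-1/2}\left(\frac{1}{2x}\right),
\end{equation*}
and observe that the three exponential factors all have modulus $1$ (the arguments $\epsilon_1/(4\pi x)$, $-\epsilon_1 k/4$, and $-\epsilon_1/8$ are real), so that
\begin{equation*}
|I_{\epsilon_1}(0,0,k;x)|=\sqrt{\pi}\left|J_{k-1/2}\left(\frac{1}{2x}\right)\right|.
\end{equation*}
Thus everything reduces to an upper bound for the $J$-Bessel function of large order $k-1/2$ at the argument $1/(2x)$.

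The key step is then a standard uniform bound for $J_\nu(y)$ when the order $\nu$ is large. I would use the power-series definition of the Bessel function given in the excerpt,
\begin{equation*}
J_{k-1/2}(y)=\sum_{m=0}^{\infty}\frac{(-1)^m}{m!\,\Gamma(m+k+1/2)}\left(\frac{y}{2}\right)^{2m+k-1/2},
\end{equation*}
and bound it crudely by its leading term times a convergent factor: the dominant contribution is the $m=0$ term, $\frac{1}{\Gamma(k+1/2)}(y/2)^{k-1/2}$, and the ratio of the $(m+1)$st term to the $m$th is $(y/2)^2/((m+1)(m+k+1/2))$, which is at most $(y/2)^2/(k+1/2)$ in modulus. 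Hence, as long as this ratio is bounded (which holds uniformly since we are in a regime where $y=1/(2x)$ is controlled relative to $k$, and in any case the tail is dominated by a geometric series once $m$ is large), the whole sum is $O$ of its first term, giving
\begin{equation*}
J_{k-1/2}\left(\frac{1}{2x}\right)\ll\frac{1}{\Gamma(k+1/2)}\left(\frac{1}{4x}\right)^{k-1/2}=\frac{(4x)^{-k+1/2}}{\Gamma(k+1/2)}.
\end{equation*}
Combining this with the modulus-one observation above yields $I_{\epsilon_1}(0,0,k;x)\ll (4x)^{-k+1/2}/\Gamma(k+1/2)$, as claimed. Note the implied constant is absolute (one can take it to be $2\sqrt\pi$, say), so no dependence on $k$ or $x$ creeps in.

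The main obstacle — though a minor one here — is justifying that the entire Bessel series is controlled by its leading term uniformly in the range of $x$ (equivalently $y=1/(2x)$) and $k$ that is relevant to the application. In the regime where $4x \geq 1$, i.e. $y \le 1/2 \le k+1/2$, the successive-ratio estimate is immediate and the geometric tail is trivially summable; in the complementary regime one must check that the factor $\exp((y/2)^2/(k+1/2))$-type bound for the tail stays $O(1)$, which it does whenever $y \ll \sqrt k$. Since in \eqref{eq:error} the relevant argument is $1/(2x)=\pi l/(cn)$ with the oscillatory/size analysis elsewhere restricting attention to the range where this is at most of order $k$, the estimate is uniform there. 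One then simply absorbs the resulting bounded factor into the implied constant, completing the proof.
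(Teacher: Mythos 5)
Your reduction is the same as the paper's: the exponential prefactors in \eqref{besselrepresentation} have real arguments, hence modulus one, so everything comes down to bounding $|J_{k-1/2}(1/(2x))|$. The gap is in how you bound the Bessel function. The paper simply cites the uniform inequality $|J_{\nu}(y)|\leq (y/2)^{\nu}/\Gamma(\nu+1)$ for real $y$ and $\nu\geq -1/2$ (NIST \cite[Eq.~10.14.4]{HMF}, which comes from the Poisson integral representation $J_\nu(y)=\frac{(y/2)^\nu}{\Gamma(\nu+1/2)\Gamma(1/2)}\int_{-1}^{1}e^{iyt}(1-t^2)^{\nu-1/2}\,dt$ and uses the oscillation of $e^{iyt}$). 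Your power-series argument throws that oscillation away: bounding the alternating series term by term gives at best
\begin{equation*}
|J_{k-1/2}(y)|\leq \frac{(y/2)^{k-1/2}}{\Gamma(k+1/2)}\exp\!\left(\frac{y^2}{4(k+1/2)}\right),
\end{equation*}
and the exponential factor is $O(1)$ only when $y\ll\sqrt{k}$ — which you concede, and then assert that the application restricts $y=1/(2x)$ to be ``at most of order $k$.'' That is a non sequitur: the corollary is invoked precisely in Lemma \ref{lem:w1}, where $z=1/(2x)=\pi l/(cn)$ ranges up to $k/5$, far beyond $\sqrt{k}$, and there your extra factor is as large as $e^{k/100}$, which cannot be absorbed into the absolute constant ``$2\sqrt{\pi}$'' you claim. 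Moreover the corollary is stated for all $x>0$ with no restriction, so a proof valid only for $1/(2x)\ll\sqrt{k}$ does not establish it.

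Two mitigating remarks. First, the statement you are trying to prove is true as written; it is only your derivation of the Bessel bound that fails, and the fix is a one-line citation of \cite[Eq.~10.14.4]{HMF} (or the Poisson integral). Second, as it happens, even the weakened bound with the spurious factor $e^{k/100}$ would still make Lemma \ref{lem:w1} go through, since $(e/20)^{2}e^{1/100}<1$; but that is an accident of the application, not a repair of the corollary, and you would need to say so explicitly rather than claim an absolute implied constant that your argument does not deliver.
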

\begin{proof}
By formula \cite[Eq.~10.14.4]{HMF}
\begin{equation*}
|J_{k-1/2}(z)|\leq \frac{(z/2)^{k-1/2}}{\Gamma(k+1/2)}.
\end{equation*}
Taking $z=1/(2x)$ we prove the assertion.
\end{proof}

 Furthermore, function $I_{\epsilon_1}$ has an integral representation in terms of Legendre polynomials.
\begin{lem}\label{lem:legendre} For $k \equiv 0 \pmod{2}$, $\epsilon_1=\pm 1$ one has
\begin{multline}
I_{\epsilon_1}\left(0,0,k;\frac{1}{2z}\right)=-e\left(\frac{\epsilon_1}{8}\right) e\left(\frac{\epsilon_1z}{2\pi}\right)\sqrt{2z}\times \\
\int_{0}^{\pi/2}\sin\left(z\cos{\theta}\right)P_{k-1}(\cos{\theta})\sin{\theta}d\theta.
\end{multline}
\end{lem}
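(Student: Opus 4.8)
The plan is to combine the two Bessel representations already at our disposal: formula \eqref{besselrepresentation}, which expresses $I_{\epsilon_1}(0,0,k;x)$ in terms of $J_{k-1/2}(1/(2x))$, and formula \eqref{eq:LegendreRepr}, which writes $J_{n+1/2}(z)$ as an integral of $P_n(\cos\theta)$ against $\exp(iz\cos\theta)$ over $[0,\pi]$. First I would set $x=1/(2z)$ in \eqref{besselrepresentation}, so that the argument of the Bessel function becomes $1/(2x)=z$; multiplying through by $e(\epsilon_1/8)$ gives
\[
I_{\epsilon_1}\!\left(0,0,k;\frac{1}{2z}\right)=\sqrt{\pi}\,e\!\left(\frac{\epsilon_1}{8}\right)e\!\left(\frac{\epsilon_1 z}{2\pi}\right)e\!\left(\frac{-\epsilon_1 k}{4}\right)J_{k-1/2}(z),
\]
where I have used that with $x=1/(2z)$ one has $\epsilon_1/(4\pi x)=\epsilon_1 z/(2\pi)$.

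Next I would apply \eqref{eq:LegendreRepr} with $n=k-1$ (legitimate since $k$ is a nonnegative integer), obtaining
\[
J_{k-1/2}(z)=(-i)^{k-1}\sqrt{\frac{z}{2\pi}}\int_{0}^{\pi}\exp(iz\cos\theta)P_{k-1}(\cos\theta)\sin\theta\,d\theta.
\]
The key simplification uses the hypothesis $k\equiv 0\pmod 2$: then $k-1$ is odd, so by \eqref{eq:legsign} $P_{k-1}(\cos(\pi-\theta))=-P_{k-1}(\cos\theta)$, i.e. $P_{k-1}$ is an odd function of $\cos\theta$. Splitting the integral over $[0,\pi/2]$ and $[\pi/2,\pi]$ and substituting $\theta\mapsto\pi-\theta$ on the second piece, the even (cosine) part of $\exp(iz\cos\theta)$ cancels and the odd (sine) part doubles, leaving
\[
\int_{0}^{\pi}\exp(iz\cos\theta)P_{k-1}(\cos\theta)\sin\theta\,d\theta=2i\int_{0}^{\pi/2}\sin(z\cos\theta)P_{k-1}(\cos\theta)\sin\theta\,d\theta.
\]

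Finally I would assemble the pieces and reconcile the constants. Collecting $\sqrt{\pi}\cdot\sqrt{z/(2\pi)}=\sqrt{z/2}=\sqrt{2z}/2$, together with the factor $2i$ from the symmetrization and the factor $(-i)^{k-1}$, I need to check that $\sqrt{\pi}\cdot(-i)^{k-1}\cdot 2i\cdot\tfrac12\sqrt{z/(2\pi)}\cdot e(-\epsilon_1 k/4)$ collapses to $-e(\epsilon_1/8)\cdot(\text{stuff})$ — here the phase bookkeeping is the one genuinely fiddly point, since $(-i)^{k-1}=i\cdot(-i)^k=i\cdot e(-k/4)$ and one must verify that $i\cdot e(-k/4)\cdot e(-\epsilon_1 k/4)\cdot 2i$ combines with the remaining $e(\epsilon_1/8)$ from the first display to produce exactly the sign $-1$ in front of $e(\epsilon_1/8)$ in the claimed identity (using $k\equiv 0\pmod 2$ so that $e(-k/4)=\pm1$ and $i^2=-1$). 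Modulo this constant-chasing, which I expect to be the main obstacle, the result follows; everything else is a direct substitution of the two lemmas already proved in the excerpt.
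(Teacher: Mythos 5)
Your proposal is correct and follows essentially the same route as the paper: substitute \eqref{besselrepresentation} with $x=1/(2z)$, apply \eqref{eq:LegendreRepr} with $n=k-1$, fold the integral over $[0,\pi]$ onto $[0,\pi/2]$ using the oddness of $P_{k-1}$ for even $k$, and track the phases (which do collapse to the stated $-1$, since $(-i)^{k-1}\cdot i\cdot e(-\epsilon_1 k/4)\cdot 2i\cdot\tfrac12 = -e\left(-(1+\epsilon_1)k/4\right) = -1$ for even $k$). The only caution is a slight double-counting of the factor $\tfrac12$ in your final display (it is already absorbed in writing $\sqrt{\pi}\sqrt{z/(2\pi)}=\sqrt{2z}/2$), but the argument itself is the paper's.
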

\begin{proof}
Consider representation \eqref{besselrepresentation} with $z:=(2x)^{-1}$. Applying \eqref{eq:LegendreRepr} with $n=k-1$, we obtain
\begin{multline*}
e(-\epsilon_1/8)I_{\epsilon_1}\left(0,0,k;\frac{1}{2z}\right)=
e\left( \frac{-\epsilon_1k}{4}\right)e\left( -\frac{k}{4}+\frac{1}{4}\right)e\left( \frac{\epsilon_1 z}{2\pi}\right)\times \\
\sqrt{\frac{z}{2}}\int_{0}^{\pi}\exp(iz\cos{\theta})P_{k-1}(\cos{\theta})\sin{\theta}d\theta.
\end{multline*}
Since $k$ is an even integer, one has $e(-\epsilon_1k/4)e(-k/4)=1$ and
\begin{multline*}
e(-\epsilon_1/8)I_{\epsilon_1}\left(0,0,k;\frac{1}{2z}\right)=
e(1/4)e\left( \frac{\epsilon_1 z}{2\pi}\right)\times \\
\sqrt{\frac{z}{2}}\int_{0}^{\pi}\exp(iz\cos{\theta})P_{k-1}(\cos{\theta})\sin{\theta}d\theta.
\end{multline*}
Now we split the integral into two parts
$\int_{0}^{\pi}=\int_{0}^{\pi/2}+\int_{\pi/2}^{\pi}$ and make the change of variables
$\phi:=\pi-\theta$ in the second integral. Property \eqref{eq:legsign} yields that
\begin{equation*}
P_{k-1}(-\cos{\phi})=(-1)^{k-1}P_{k-1}(\cos{\phi})=-P_{k-1}(\cos{\phi})
\end{equation*}
for even  $k$. Finally, since $e(1/4)=\exp(\pi i/2)=i$ we have
\begin{multline*}
e(1/4)\int_{0}^{\pi}\exp(iz\cos{\theta})P_{k-1}(\cos{\theta})\sin{\theta}d\theta=\\i\int_{0}^{\pi/2}P_{k-1}(\cos{\theta})\sin{\theta} \left[\exp(iz\cos{\theta})-\exp(-iz\cos{\theta}) \right] d\theta=\\
-2\int_{0}^{\pi/2}\sin(z\cos{\theta})P_{k-1}(\cos{\theta})\sin{\theta}d\theta.
\end{multline*}
The assertion follows.
\end{proof}


\section{Asymptotic approximation of Legendre polynomials}

The following theorem is obtained by taking $\alpha=\beta=0$ in \cite[Eq.~1.1-1.3]{BG}.
\begin{thm}\label{thm:BG} (Baratella, Gatteschi, \cite{BG})
Let $N:=n+1/2$. Then
\begin{multline}
P_n(\cos{\theta})=\sqrt{\frac{\theta}{\sin{\theta}}} \Biggl(J_0(N\theta)\sum_{s=0}^{m}
\frac{A_s(\theta)}{N^{2s}}+\\\theta J_1(N\theta)\sum_{s=0}^{m-1}\frac{B_s(\theta)}{N^{2s+1}}+E_m
\Biggr),
\end{multline}
where for fixed positive constants $c$ and $\delta$ one has
\begin{equation}
E_m\ll \begin{cases}
\theta^{1/2}N^{-2m-3/2}& \text{ if }c/N\leq \theta\leq \pi-\delta\\
\theta^2N^{-2m}& \text{ if }0<\theta\leq c/N
\end{cases}.
\end{equation}
The functions $A_s(\theta)$, $B_s(\theta)$ are analytic for $0\leq \theta<\pi$ and defined recursively, starting from $A_0(\theta)=1$, by
\begin{equation}\label{eq:B}
\theta B_s(\theta)=-\frac{1}{2}A_{s}'(\theta)-\frac{1}{2}\int_{0}^{\theta}\frac{A_s'(t)}{t}dt
+\frac{1}{2}\int_{0}^{\theta}f(t)A_s(t)dt,
\end{equation}
\begin{equation}\label{eq:A}
A_{s+1}(\theta)=\frac{1}{2}\theta B_s'(\theta)-\frac{1}{2}\int_{0}^{\theta}tf(t)B_s(t)dt+\lambda_{s+1},
\end{equation}
with
\begin{equation}
f(t)=\frac{1}{4t^2}-\frac{1}{16\sin^2(t/2)}-\frac{1}{16\cos^2{(t/2)}},
\end{equation}
where the constants of integration $\lambda_{s+1}$ are chosen such that $A_{s+1}(0)=0$ for any integral $s\geq 0$.
\end{thm}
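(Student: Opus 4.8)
The statement to prove is Theorem~\ref{thm:BG}, attributed to Baratella and Gatteschi, which we are told is obtained by specializing \cite[Eq.~1.1--1.3]{BG} to $\alpha=\beta=0$. Since this is a citation of a published result rather than something to be proved from scratch, the ``proof'' is really a verification that the specialization goes through and that the error bounds stated match. I outline how I would carry out that verification.

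\medskip

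\textbf{Plan of the verification.} The first step is to recall the general Baratella--Gatteschi expansion for Jacobi polynomials $P_n^{(\alpha,\beta)}(\cos\theta)$, which expresses them, after the Liouville--Green (WKB) transformation of the associated Sturm--Liouville equation, in terms of Bessel functions $J_\alpha(N\theta)$ and $J_{\alpha+1}(N\theta)$ with $N$ a suitable large parameter (here $N=n+(\alpha+\beta+1)/2$, which collapses to $n+1/2$ when $\alpha=\beta=0$) and a prefactor built from the weight. One then sets $\alpha=\beta=0$: the Jacobi polynomial becomes the Legendre polynomial $P_n$, the Bessel orders become $0$ and $1$, and the weight-derived prefactor $\left(\frac{\theta}{\sin\theta}\right)^{\alpha+1/2}\left(\ldots\right)^{\beta+1/2}$ collapses to $\sqrt{\theta/\sin\theta}$. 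I would check that the potential function appearing in the transformed ODE specializes to the stated
\[
f(t)=\frac{1}{4t^2}-\frac{1}{16\sin^2(t/2)}-\frac{1}{16\cos^2(t/2)},
\]
by substituting $\alpha=\beta=0$ into the general potential (which has the Pöschel--Teller form $\frac{\alpha^2-1/4}{\text{something}}+\frac{\beta^2-1/4}{\text{something}}+\text{const}$); note that the constant term and the $\theta^{-2}$ singularities must combine so that $f$ is analytic at $0$, which is consistent with $A_s,B_s$ being analytic on $[0,\pi)$.

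\medskip

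\textbf{Key steps in order.} (1) State the general expansion with the explicit recursions \eqref{eq:B}, \eqref{eq:A} and confirm that the normalizing constants $\lambda_{s+1}$ are fixed exactly by the analyticity requirement $A_{s+1}(0)=0$, matching \cite{BG}. (2) Verify that $A_0(\theta)=1$ is the correct starting value after specialization (in the general case $A_0$ is a constant determined by matching the leading asymptotics of $P_n^{(\alpha,\beta)}$ against $J_\alpha$; for $\alpha=\beta=0$ this constant is $1$). (3) Transcribe the error estimate: Baratella--Gatteschi give a bound of the shape $E_m \ll \theta^{1/2} N^{-2m-3/2}$ uniformly on $c/N \le \theta \le \pi-\delta$ and a separate bound near the origin; for $\alpha=\beta=0$ the power of $\theta$ near $0$ becomes $\theta^{\alpha+2}=\theta^2$, giving the second case $\theta^2 N^{-2m}$. (4) Record that the restriction $0\le\theta<\pi$ (rather than $\le\pi$) is forced by the $\cos^2(\theta/2)$ in the denominator of $f$.

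\medskip

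\textbf{Main obstacle.} The only genuine subtlety is bookkeeping the constants and the exact exponents of $N$ and $\theta$ in the error term under the specialization, since \cite{BG} states things for general $(\alpha,\beta)$ with parameter-dependent prefactors, and one must make sure no hidden factor of $N^{\pm1/2}$ or $\theta^{\pm1/2}$ is lost when $\alpha,\beta\to 0$; in particular the transition between the two regimes at $\theta\asymp 1/N$ must be consistent (both bounds give $\asymp N^{-2m-3/2}$ there). Everything else is a direct quotation. Hence the proof is simply:

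\begin{proof}
Specialize \cite[Eq.~1.1--1.3]{BG} to $\alpha=\beta=0$. The Jacobi polynomial $P_n^{(0,0)}(\cos\theta)$ is the Legendre polynomial $P_n(\cos\theta)$, the parameter $N=n+(\alpha+\beta+1)/2$ becomes $n+1/2$, the Bessel orders $\alpha$ and $\alpha+1$ become $0$ and $1$, and the prefactor $\bigl(\tfrac{\theta}{\sin\theta}\bigr)^{\alpha+1/2}$ collapses to $\sqrt{\theta/\sin\theta}$. Substituting $\alpha=\beta=0$ into the potential function of the Liouville--Green--transformed Jacobi equation yields
\[
f(t)=\frac{1}{4t^2}-\frac{1}{16\sin^2(t/2)}-\frac{1}{16\cos^2(t/2)},
\]
which is analytic on $[0,\pi)$ since the singularities at $t=0$ cancel; this analyticity forces the constants of integration $\lambda_{s+1}$ in \eqref{eq:A} so that $A_{s+1}(0)=0$, and the recursions \eqref{eq:B}, \eqref{eq:A} are those of \cite{BG} with $\alpha=\beta=0$, starting from $A_0(\theta)=1$. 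Finally, the error bound of \cite{BG} specializes: on $c/N\le\theta\le\pi-\delta$ it reads $E_m\ll\theta^{1/2}N^{-2m-3/2}$, and near the origin the exponent $\theta^{\alpha+2}$ becomes $\theta^2$, giving $E_m\ll\theta^2N^{-2m}$ on $0<\theta\le c/N$. These are the stated formulas.
\end{proof}
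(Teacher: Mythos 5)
Your proposal is correct and follows exactly the same route as the paper, which offers no proof beyond the single remark that the theorem ``is obtained by taking $\alpha=\beta=0$ in \cite[Eq.~1.1-1.3]{BG}''; you carry out that same specialization, merely adding more explicit bookkeeping of the prefactor, the potential $f$, and the error exponents. Nothing further is required.
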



\section{Averaging over weight}\label{averagingoverweight}
Let $h \in C_{0}^{\infty}(\R^{+})$ be a  non-negative function, compactly supported on interval $[\theta_1,\theta_2]$ such that $\theta_2>\theta_1>0$ and
\begin{equation}
\|h^{(n)}\|_1\ll 1 \text{ for all }n \geq 0.
\end{equation}

Applying the Poisson summation and integrating by parts $a \geq 2$ times, we have
\begin{equation}
\sum_{k}h\left(\frac{4k}{K} \right)=\frac{HK}{4}+O\left( \frac{1}{K^a}\right),
\end{equation}
where
$$H=\int_{0}^{\infty}h(y)dy.$$
In this section we prove theorem \ref{thm:main}. Namely
we show that for all $l $ one has
\begin{equation}\label{mainasympt}
\sum_{k}h\left(\frac{4k}{K} \right)\sum_{f \in H_{4k}(1)}^h\lambda_f(l)L_f(1/2)=\frac{2}{\sqrt{l}}\frac{HK}{4}+O\left(K\frac{l^{1/2+\epsilon}}{K^2}\right).
\end{equation}

The main term in \eqref{mainasympt} is obtained by taking $u=v=0$ in theorem \ref{Thm:explicitformula} and averaging the main terms with respect to $k$.
Note that in formula \eqref{mainasympt} the summation is over elements of weight $4k$, and therefore, Theorem \ref{Thm:explicitformula} should be used with $k$ replaced by $2k$. The same applies to all other results of Section \ref{sec:EF}.

Consider \eqref{eq:error} with $u=v=0$. Let $z:=\pi l/(cn)$. We split the error term into two parts
\begin{equation*}
V_1(l;0,0,2k)=W_1(l,k)+W_2(l,k),
\end{equation*}
where the summation in $W_1(l,k)$ is over $c,n$ such that $z<k/5$ and in $W_2(l,k)$ such that $z\geq k/5$.
\begin{lem}\label{lem:w1}
There exists an absolute constant $C>1$ such that
\begin{equation}
\sum_{k}h\left(\frac{4k}{K} \right) W_1(l,k)\ll  \frac{l^{1/2+\epsilon}K^{\epsilon}}{C^K}.
\end{equation}

\end{lem}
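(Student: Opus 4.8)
The plan is to exploit the smallness of $z$ in the range $z < k/5$ together with the pointwise bound on $I_{\epsilon_1}$ from Corollary \ref{cor:error}. Recall that $W_1(l,k)$ collects the terms of \eqref{eq:error} (with $u=v=0$ and $k$ replaced by $2k$, per the remark after \eqref{mainasympt}) over pairs $(c,n)$ with $(n,c)=1$ for which $z:=\pi l/(cn)<k/5$. Since the argument of the Bessel function appearing through $I_{\epsilon_1}(0,0,2k;cn/(2\pi l))$ is $1/(2x)=z$, Corollary \ref{cor:error} gives $I_{\epsilon_1}(0,0,2k;cn/(2\pi l)) \ll (cn/( \pi l))^{-2k+1/2}/\Gamma(2k+1/2) \asymp (2z)^{2k-1/2}/\Gamma(2k+1/2)$. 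Using $\Gamma(2k+1/2) \gg (2k/e)^{2k}$ (Stirling) and $z<k/5$, the factor $(2z/(2k))^{2k}$ forces geometric decay: $(2z)^{2k}/\Gamma(2k+1/2) \ll (2z e/(2k))^{2k} \ll (e/5)^{2k} \cdot (\text{something} \le 1)^{2k}$, wait — more carefully, $(2z)^{2k}/(2k/e)^{2k} = (2ze/(2k))^{2k} = (ze/k)^{2k} < (e/5)^{2k} < C^{-2k}$ for an absolute constant $C>1$ since $e/5<1$.

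First I would make the reduction explicit: write $W_1(l,k)$ as a sum over $c\ge 1$ and $n\ge 1$ coprime to $c$ with $cn > 5\pi l/k$, of terms bounded (using $|e(\cdot)|=1$ everywhere, and $\Re u=\Re v=0$) by $c^{-1/2}n^{-1/2}(2\pi)^{-1/2}\,|I_{\pm 1}(0,0,2k;cn/(2\pi l))|$. Insert the bound above to get each term $\ll (cn)^{-1/2} (2z)^{2k-1/2}/\Gamma(2k+1/2) \ll (cn)^{-1/2}(\pi l/(cn))^{2k-1/2}/\Gamma(2k+1/2)$. I would then peel off one factor of $(\pi l/(cn))$ raised to a small power, say bound $(\pi l/(cn))^{2k-1/2} \le (5k^{-1}\cdot\tfrac12)^{2k-3/2}\cdot(\pi l/(cn))$ using $cn>5\pi l/k$ for all but the last unit of exponent, so that the surviving sum $\sum_{c,n}(cn)^{-3/2}$ converges absolutely to an absolute constant. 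Absorbing the $1/\Gamma(2k+1/2)$ against $k^{-(2k-3/2)}$ via Stirling yields $W_1(l,k) \ll l \cdot C_0^{-k}$ for an absolute $C_0>1$ (after possibly adjusting constants; the extra factor $l$ comes from the single retained power of $\pi l/(cn)$, and one may trade $l^1$ for $l^{1/2+\epsilon}$ at the cost of a harmless power of $k^{\epsilon}$, or simply note $l \ll l^{1/2+\epsilon}C_0^{k/2}$ is false — instead keep $l^1$ and note $l^1 C_0^{-k} \ll l^{1/2+\epsilon}C_0^{-k/2}$ once $l \ll C_0^{k/2}$, handling the complementary range $l \gg C_0^{k}$ trivially since then $W_1$ is empty because $cn>5\pi l/k$ combined with... ). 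Cleaner: simply retain $(\pi l/(cn))^{1/2+\epsilon}$ rather than one full power, leaving $\sum(cn)^{-1-\epsilon}<\infty$ and the prefactor $l^{1/2+\epsilon}$ directly.

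Finally I would sum over $k$. Since $h(4k/K)$ is supported on $k \asymp K$ and $\|h\|_\infty \ll 1$, we have $\sum_k h(4k/K)\,W_1(l,k) \ll l^{1/2+\epsilon}\sum_{k\asymp K} C^{-k} \ll l^{1/2+\epsilon} C^{-cK}$ for some $c>0$ and $C>1$, and after renaming the constant this is $\ll l^{1/2+\epsilon}K^\epsilon/C^K$ as claimed (the $K^\epsilon$ is pure slack and could be dropped). The main obstacle is purely bookkeeping: one must be careful that the geometric factor $(ze/k)^{2k}$ genuinely dominates after extracting enough of the $(cn)^{-1}$-power to make the double sum over $(c,n)$ converge, i.e. that the threshold $z<k/5$ leaves a constant strictly less than $1/e$ with room to spare; the choice of the constant $5$ (versus, say, $3$) is exactly what guarantees $e/5 < 1$ with margin, so no genuine analytic difficulty arises — everything reduces to Stirling's formula and absolute convergence of $\sum_{c,n}(cn)^{-1-\epsilon}$.
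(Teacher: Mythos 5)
Your proposal is correct and follows essentially the same route as the paper: both arguments combine the pointwise bound of Corollary \ref{cor:error} with Stirling's formula so that the threshold $z<k/5$ produces a geometric factor $(e/20)^{2k}$ (your dropped factor of $2$ only weakens this to $(e/5)^{2k}$, which still suffices), and then make the $(c,n)$-sum converge — the paper by grouping $d=cn$ and comparing to $\int_{d_0}^{\infty}x^{-2k+\epsilon}\,dx$, you by peeling off $(\pi l/(cn))^{1/2+\epsilon}$ to leave $\sum (cn)^{-1-\epsilon}<\infty$. The difference is pure bookkeeping; both yield $W_1(l,k)\ll l^{1/2+\epsilon}C_0^{-k}$ and the stated bound after summing over $k\asymp K$.
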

\begin{proof}
 Let $d:=cn$. Since $z<k/5$ one has $d>d_0$ with $d_0:=5\pi l/k$.
By corollary \ref{cor:error}
\begin{multline*}
W_1(l,k)\ll \frac{1}{\Gamma(2k+1/2)}\sum_{d>d_0}d^{-1/2+\epsilon}
\left( \frac{\pi l}{2d}\right)^{2k-1/2}\ll\\
\frac{e^{2k}}{(2k)^{2k}}\left( \frac{\pi l}{2}\right)^{2k-1/2}\int_{d_0}^{\infty}
x^{-2k+\epsilon}dx \ll \\  d_0^{1/2+\epsilon}\frac{e^{2k}}{(2k)^{2k}}\left(\frac{\pi l}{2d_0} \right)^{2k-1/2}\ll l^{1/2+\epsilon}k^{-1}\left(\frac{e}{20}\right)^{2k}.
\end{multline*}

Summing the result over $k$ with the test function, we prove the assertion.
\end{proof}
If $l \ll K$ with a sufficiently small implied constant, the sums over $c$ and $n$ in $W_2(l,k)$ are empty and the error term in \eqref{mainasympt} can be estimated using lemma \ref{lem:w1}. Otherwise, the main contribution comes from the term involving $W_2(l,k)$, as we now show.

\begin{lem}\label{lem:w2} For any $\epsilon>0$ one has
\begin{equation}
\sum_{k}h\left(\frac{4k}{K} \right) W_2(l,k)\ll K\frac{l^{1/2+\epsilon}}{K^2}.
\end{equation}
\end{lem}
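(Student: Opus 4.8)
The plan is to turn $W_2(l,k)$ into a genuinely finite sum of Bessel functions in the weight aspect, complete the range of $k$, and extract cancellation from the sum over $k$ by means of the Baratella--Gatteschi expansion (Theorem \ref{thm:BG}). Apply \eqref{besselrepresentation} with $k$ replaced by $2k$ (legitimate since $2k\equiv 0\Mod 2$) to the two factors $I_{\pm 1}(0,0,2k;cn/(2\pi l))$ in \eqref{eq:error}: writing $z_{c,n}:=\pi l/(cn)$, so that $1/(2\cdot cn/(2\pi l))=z_{c,n}$, and using $e(-\epsilon_1 k/2)=(-1)^k$, one gets $I_{\epsilon_1}(0,0,2k;cn/(2\pi l))=\sqrt{\pi}\,e(\epsilon_1/8)\,e(\epsilon_1 z_{c,n}/(2\pi))(-1)^k J_{2k-1/2}(z_{c,n})$. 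Substituting into \eqref{eq:error} with $u=v=0$ and collecting the unimodular factors yields
\[
W_2(l,k)=\sqrt 2\sum_{\substack{cn\le 5\pi l/k\\ (n,c)=1}}\frac{(-1)^k J_{2k-1/2}(z_{c,n})}{\sqrt{cn}}\cos\!\bigl(2\pi\, n^{*}l/c-z_{c,n}\bigr),\qquad nn^{*}\equiv 1\Mod c .
\]
Summing over $k$ against $h(4k/K)$ and interchanging the (finite) sums, only pairs with $5\pi l/(cn)\ge K\theta_1/4$, i.e.\ $cn\le D:=20\pi l/(K\theta_1)$, occur; if $l\ll K$ with a small enough implied constant then $D<1$ and $W_2\equiv 0$, the trivial case. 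For a surviving $(c,n)$ the inner sum is $\sum_{k\le 5z_{c,n}}h(4k/K)(-1)^kJ_{2k-1/2}(z_{c,n})$, which still carries a sharp cut in $k$. I remove it by completing to $\Sigma(z_{c,n})$, where $\Sigma(z):=\sum_k h(4k/K)(-1)^kJ_{2k-1/2}(z)$: for $k>5z_{c,n}$ one has $z_{c,n}<k/5$, far below the turning point of $J_{2k-1/2}$, so Corollary \ref{cor:error} gives $|J_{2k-1/2}(z_{c,n})|\ll (e/20)^{2k}$, and since $z_{c,n}\ge \pi l/D=K\theta_1/20$ the discarded tail is $O(C^{-K})$ for an absolute $C>1$ — this is precisely the purpose of the constant $\tfrac15$ in the definition of $W_1,W_2$. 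Summed over the $\ll l$ relevant pairs this is negligible for $l\ll K^{O(1)}$.

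It therefore suffices to bound $\Sigma(z)\ll \sqrt z/K$ uniformly in $z>0$. By \eqref{eq:LegendreRepr} with $n=2k-1$, in which $(-i)^{2k-1}=(-1)^k i$ absorbs the factor $(-1)^k$,
\[
\Sigma(z)=i\sqrt{\tfrac{z}{2\pi}}\int_0^{\pi} e^{iz\cos\theta}\Bigl(\sum_k h(4k/K)P_{2k-1}(\cos\theta)\Bigr)\sin\theta\,d\theta ,
\]
so that $\Sigma(z)\ll\sqrt z\int_0^{\pi}\bigl|\sum_k h(4k/K)P_{2k-1}(\cos\theta)\bigr|\sin\theta\,d\theta$, and everything reduces to showing this integral is $\ll 1/K$. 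Using $\theta\mapsto\pi-\theta$, under which $P_{2k-1}(\cos\theta)$ changes sign by \eqref{eq:legsign} while $\sin\theta$ is fixed, reduce to $[0,\pi/2]$ and split at $\theta=C_0/K$ for a suitable fixed $C_0\ge 1$. On $[0,C_0/K]$ one uses only $|P_{2k-1}(\cos\theta)|\le 1$, so $\sum_k h(4k/K)|P_{2k-1}(\cos\theta)|\ll K$ and $\int_0^{C_0/K}K\sin\theta\,d\theta\ll 1/K$.

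On $[C_0/K,\pi/2]$, where $N:=2k-1/2\asymp K$ and $N\theta\gg 1$, insert Theorem \ref{thm:BG} followed by the asymptotic expansion \eqref{eq:repJBessel} for $J_0(N\theta)$ and its analogue for $J_1(N\theta)$. Keeping one or two terms, the remainder $E_m$ of Theorem \ref{thm:BG} and the truncation error of \eqref{eq:repJBessel} contribute, after summing $h(4k/K)$ over $k$ and integrating against $\sin\theta$, an amount $\ll 1/K$ (using $N\asymp K$, $\sqrt{\theta/\sin\theta}\asymp 1$, $\sin\theta\ll\theta$ and $\int_{C_0/K}^{\pi}\theta^{-2d+1/2}\,d\theta\ll (K/C_0)^{2d-3/2}$); in the leading term, up to the analytic weights $\sqrt{\theta/\sin\theta}\,A_s(\theta)$ and $\theta\sqrt{\theta/\sin\theta}\,B_s(\theta)$, one is left with a multiple of $(\sin\theta)^{-1/2}\sum_k \tfrac{h(4k/K)}{\sqrt{2k-1/2}}e^{\pm i(2k-1/2)\theta}$, and Poisson summation in $k$ turns this into $\sum_m\widehat g(m-\theta/\pi)$ with $g(t)=h(4t/K)/\sqrt{2t-1/2}$; since $|m-\theta/\pi|\ge\theta/\pi$ for $m=0$ and $\ge\tfrac12$ for $m\ne0$ on $(0,\pi/2]$, only rapidly decaying terms survive, so $\sum_k \tfrac{h(4k/K)}{\sqrt{2k-1/2}}e^{\pm i(2k-1/2)\theta}\ll K^{1/2}(1+K\theta)^{-A}$, whence $\int_{C_0/K}^{\pi/2}K^{1/2}(1+K\theta)^{-A}(\sin\theta)^{1/2}\,d\theta\ll 1/K$ after $u=K\theta$. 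This gives $\Sigma(z)\ll\sqrt z/K$, and then
\[
\sum_k h(4k/K)W_2(l,k)\ll\frac1K\sum_{\substack{cn\le D\\(n,c)=1}}\frac{\sqrt{z_{c,n}}}{\sqrt{cn}}+O\!\bigl(l\,C^{-K}\bigr)=\frac{\sqrt{\pi l}}{K}\sum_{d\le D}\frac{\tau(d)}{d}+O\!\bigl(l\,C^{-K}\bigr)\ll\frac{l^{1/2+\epsilon}}{K}=K\,\frac{l^{1/2+\epsilon}}{K^2},
\]
using $\sum_{d\le D}\tau(d)/d\ll(\log D)^2\ll l^{\epsilon}$. \textbf{The main obstacle} is the uniform-in-$\theta$ estimate for $\sum_k h(4k/K)P_{2k-1}(\cos\theta)$: one must track the Baratella--Gatteschi error $E_m$ and the coefficient functions $A_s,B_s$ with their derivatives precisely enough that the implied constant in the Poisson step is uniform, and handle the transition near $\theta\sim 1/K$ (where \eqref{eq:repJBessel} ceases to help) cleanly enough that the trivial bound there still costs only $O(1/K)$ — which is why the split must be made at scale $1/K$ and not larger.
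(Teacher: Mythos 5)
Your proposal is correct and follows essentially the same route as the paper: reduce $W_2$ via the Bessel/Legendre integral representation to bounding $\int_0^{\pi/2}\bigl|\sum_k h(4k/K)P_{2k-1}(\cos\theta)\bigr|\sin\theta\,d\theta\ll 1/K$, then apply the Baratella--Gatteschi expansion, split at $\theta\asymp 1/K$ with a trivial bound on the short range, and use the $J_0$ asymptotics plus Poisson summation in $k$ on the long range. The only differences are organizational (you complete the $k$-sum into $\Sigma(z)$ and defer the $\sum\tau(d)/d\ll l^{\epsilon}$ bookkeeping to the end, whereas the paper extracts the $l^{\epsilon}$ from the $c,n$-sum at the outset), and these do not affect the argument.
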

\begin{proof}
 It follows from lemma \ref{lem:legendre} that
\begin{multline*}
\sum_{k}h\left(\frac{4k}{K} \right)W_2(l,k)\ll \sum_{cn\ll l/k}\frac{\sqrt{l}}{cn} \int_{0}^{\pi/2}
\left| \sum_{k}h\left(\frac{4k}{K}\right) P_{2k-1}(\cos{\theta}) \right|\times \\ \sin{\theta}d\theta\ll
l^{1/2+\epsilon}\int_{0}^{\pi/2}
\left| \sum_{k}h\left(\frac{4k}{K}\right) P_{2k-1}(\cos{\theta}) \right|\sin{\theta}d\theta.
\end{multline*}

To approximate the Legendre polynomials we apply theorem \ref{thm:BG} with $m=1$ and $N=2k-1/2$, i.e.
\begin{multline*}
P_{2k-1}(\cos{\theta})=\sqrt{\frac{\theta}{\sin{\theta}}}\Biggl( J_0(N\theta)\left(1+\frac{A_1(\theta)}{N^2}\right)+ \theta J_1(N\theta)\frac{B_0(\theta)}{N}+E_1 \Biggr),
\end{multline*}
where $B_0(\theta)$ and $A_1(\theta)$ are defined by \eqref{eq:B}, \eqref{eq:A}.

First, we estimate the contribution of the error term $E_1$ as follows
\begin{multline*}
l^{1/2+\epsilon}\sum_{k}h\left(\frac{4k}{K} \right)\int_{0}^{\pi/2}\sqrt{\theta\sin{\theta}}|E_1|d\theta\ll
l^{1/2+\epsilon}\sum_{k}h\left(\frac{4k}{K} \right)\times \\ \biggl(\int_{0}^{c/N}\frac{\theta^3d\theta}{N^{2}}+\int_{c/N}^{\pi/2}\frac{\theta^{3/2}d\theta}{N^{7/2}}\biggr)\ll l^{1/2+\epsilon}
\sum_{k}h\left(\frac{4k}{K} \right)\frac{1}{k^{7/2}}\ll  K\frac{l^{1/2+\epsilon}}{K^{7/2}}.
\end{multline*}
For the main terms the largest contribution comes from the first summand, namely
\begin{equation*}MT:=l^{1/2+\epsilon} \int_{0}^{\pi/2} \left|\sum_{k}h\left(\frac{4k}{K} \right)
J_0(N\theta)\right|\sqrt{\theta \sin{\theta}}d\theta.
\end{equation*}
Indeed, two other summands have similar oscillation but they are smaller is terms of absolute value because
\begin{equation*}
\theta B_0(\theta)=O(\theta), \quad A_1(\theta)=O(\theta^2).
\end{equation*}

Note that the oscillation in $MT$ is only possible  when $N\theta \gg 1$. Hence let us split the integral over $\theta$ into two parts at the point $t:=C/K$ for some absolute constant $C>1$. The first part is bounded by
\begin{equation*}M_1:=l^{1/2+\epsilon}\sum_{k}h\left(\frac{4k}{K} \right)\int_{0}^{t}
\theta d\theta\ll K\frac{l^{1/2+\epsilon}}{K^2}.
\end{equation*}

Now we estimate the second part
\begin{equation*}M_2:=l^{1/2+\epsilon}\int_{t}^{\pi/2}\left|\sum_{k}h\left(\frac{4k}{K} \right)
J_0(N\theta)\right|\sqrt{\theta \sin{\theta}}d\theta.
\end{equation*}
For the $J$-Bessel function we apply representation \eqref{eq:repJBessel}. For $d \geq 1$ the contribution of $R_1$, $R_2$ is majorated by
\begin{multline*}
M_{2,1}:=l^{1/2+\epsilon}\sum_{k}h\left(\frac{4k}{K} \right)\frac{1}{k^{1/2+2d}}\int_{t}^{\pi/2}
\frac{\theta}{\theta^{1/2+2d}}d\theta
\ll \\ l^{1/2+\epsilon} \frac{K}{K^{1/2+2d}}\frac{t^2}{t^{1/2+2d}}\ll l^{1/2+\epsilon}\frac{K}{K^{2}}.
\end{multline*}

Since $N\theta\gg 1$ it is sufficient to consider only the contribution of the first summand in \eqref{eq:repJBessel}, which is bounded by
\begin{equation*}
M_{2,2}:=l^{1/2+\epsilon}\int_{t}^{\pi/2}\sqrt{\theta}\left|\sum_{k}\frac{h\left(4k/K \right)}{\sqrt{2k-1/2}}\cos\left((2k-1/2)\theta-\frac{\pi}{4}\right)\right|d\theta.
\end{equation*}
Other summands in \eqref{eq:repJBessel} have similar oscillation but are smaller in absolute value.
By Poisson's summation formula the sum over $k$ is majorated by a linear combination of expressions of the form
\begin{multline*}
\sum_{u \in\Z}\int_{-\infty}^{\infty}
h\left( \frac{4y}{K}\right)\frac{\exp(i2y\theta)}{\sqrt{y}}\exp(-2\pi iyu)dy=\\
\frac{K}{\sqrt{K}}\sum_{u \in\Z}\int_{-\infty}^{\infty}
\frac{h(4y)}{\sqrt{y}}\exp\left(iyK(2\theta-2\pi u)\right)dy.
\end{multline*}
Since $0<\theta\leq \pi/2$ one has $|2\theta-2\pi u|\gg u$ for $u\neq 0$. Integrating by parts $a\geq 2$ times, we obtain
\begin{multline*}
M_{2,2}\ll l^{1/2+\epsilon}\int_{t}^{\pi/2}\sqrt{\theta} \frac{K}{\sqrt{K}}\left( \frac{1}{(\theta K)^a}+\sum_{u \neq 0}\frac{1}{(Ku)^a}\right)\times \\
\int_{-\infty}^{+\infty}\left|\frac{\partial^a}{\partial y^a}
\left( \frac{h(y)}{\sqrt{y}}\right)\right|dy d\theta.
\end{multline*}
It follows from the definition of function $h$ that
\begin{equation*}
\int_{-\infty}^{+\infty}\left|\frac{\partial^a}{\partial y^a}
\left( \frac{h(y)}{\sqrt{y}}\right)\right|dy\ll 1.
\end{equation*}
Thus
\begin{equation*}
M_{2,2}\ll l^{1/2+\epsilon}\frac{K}{\sqrt{K}}\int_{t}^{\pi/2}\sqrt{\theta} \frac{d\theta}{(\theta K)^a}\ll l^{1/2+\epsilon}\frac{K}{K^{a+1/2}}\frac{t^{3/2}}{t^a}\ll K\frac{l^{1/2+\epsilon}}{K^2}.
\end{equation*}
Finally,
\begin{equation*}
MT\ll M_1+M_2\ll M_1+M_{2,1}+M_{2,2}\ll K\frac{l^{1/2+\epsilon}}{K^2}.
\end{equation*}
\end{proof}

\section*{Acknowledgments}
The authors thank the referee for careful reading of the manuscript and for recommending several improvements in exposition.

\nocite{}


\begin{thebibliography}{}



\bibitem{BF}
\newblock
O. Balkanova, D. Frolenkov, \emph{Non-vanishing of automorphic $L$-functions of prime power level}, Monat. f\"{u}r Math. ($2017$), published online,  DOI: 10.1007/s00605-017-1031-4,	arXiv:1605.02434 [math.NT].

\bibitem{BF1}
\newblock
O. Balkanova, D. Frolenkov, \emph{Moments of L-functions and the Liouville-Green method}, 	arXiv:1610.03465 [math.NT].



\bibitem{BG}
\newblock
P. Baratella, L. Gatteschi, \emph{The bounds for the error term of an asymptotic approximation of Jacobi polynomials}, in: Orthogonal Polynomials and Their Applications (Segovia, 1986), Lecture Notes in Mathematics, vol. 1329, Springer, Berlin, New York, 1988,  203--221.



\bibitem{GR}
\newblock
I. S. Gradshteyn and I. M. Ryzhik, \emph{ Table of Integrals, Series, and Products}. Edited by A. Jeffrey and D. Zwillinger. Academic Press, New York, 7th edition, 2007.

\bibitem{IS}
\newblock
H. Iwaniec, P. Sarnak,  \emph{The non-vanishing of central values of automorphic L-functions and Landau-Siegel zeros},  Israel Journal of Math. $120$ ($2000$), $155-177$.



\bibitem{HMF}
\newblock
F.W.J.~Olver , D.W.~Lozier, R.F.~Boisvert and C.W.~Clarke, \emph{{NIST} {H}andbook of {M}athematical {F}unctions}, Cambridge University Press, Cambridge $(2010)$.







\end{thebibliography}
\end{document}